\def\Ric{{\operatorname{Ric}}}
\newcommand{\E}{\mathbb{E}}
\newcommand{\IC}{\mathbb{C}}
\newcommand{\IR}{\mathbb{R}}
\newcommand{\question}[1]{\leavevmode{\marginpar{\tiny%
$\hbox to 0mm{\hspace*{-0.5mm}$\leftarrow$\hss}%
\vcenter{\vrule depth 0.1mm height 0.1mm width \the\marginparwidth}%
\hbox to 0mm{\hss$\rightarrow$\hspace*{-0.5mm}}$\\\relax\raggedright #1}}}
\newcommand{\IT}{T}
\newcommand{\IMM}{\mathscr{M}}
\newcommand{\ICC}{C^{\infty}}
\newcommand{\ILL}{\mathscr{L}}
\newcommand{\IJJ}{\mathscr{J}}
\newcommand{\IHH}{\mathscr{H}}
\newcommand{\IAA}{\mathscr{A}}
\renewcommand{\c}{ \mathrm{c} }
\newcommand{\IL}{L}
\newcommand{\dom}{\mathrm{Dom}}
\newcommand{\Id}{{d}}
\newcommand{\f}{\frac}
\newcommand{\slim}{\mathop{\rm{st}\rule[0.5ex]{1ex}{0.1ex}\mathrm{lim}}}
\def\partr#1{/\!/_{\!#1}^{\phantom{.}}}
\newcommand\newdot{{\kern.8pt\cdot\kern.8pt}}
\def\nbull{{\raise1.5pt\hbox{\bf .}}}
\theoremstyle{plain}            
\newtheorem{theorem}{theorem}[section]
\newtheorem{Lemma}[theorem]{Lemma}
\newtheorem{Theorem}[theorem]{Theorem}
\newtheorem{Theoreme}{Theorem}
\newtheorem{Corollarye}{Corollary}
\theoremstyle{definition}
\newtheorem{Remark}[theorem]{Remark}
\newtheorem{Example}[theorem]{Example}
\numberwithin{equation}{section}
\begin{document}

\begin{titlepage}
\title[Scattering theory and Ricci flow]{Scattering theory without injectivity radius assumptions and spectral stability for the Ricci flow}

\author[B. G\"uneysu]{Batu G\"uneysu}

\author[A. Thalmaier]{Anton Thalmaier}

\address{Institut für Mathematik, Humboldt-Universität zu Berlin, 12489 Berlin, Germany}
\email{gueneysu@math.hu-berlin.de}

\address{Mathematics Research Unit, University of Luxembourg, Maison du Nombre, 4364 Esch-sur-Alzette, Luxembourg}
\email{anton.thalmaier@uni.lu}

\end{titlepage}

\maketitle 

\begin{abstract} We prove a completely new integral criterion for the existence and completeness of the wave operators $W_{\pm}(-\Delta_h,-\Delta_g, I_{g,h})$ corresponding to the (unique self-adjoint realizations of) the Laplace-Beltrami operators $-\Delta_j$, $j=1,2$, that are induced by two quasi-isometric complete Riemannian metrics $g$ and $h$ on an open manifold $M$. In particular, this result provides a criterion for the absolutely continuous spectra of $-\Delta_g$ and $-\Delta_h$ to coincide. Our proof relies on estimates that are obtained using a probabilistic Bismut type formula for the gradient of a heat semigroup.  Unlike all previous results, our integral criterion only requires some lower control on the Ricci curvatures and some upper control on the heat kernels, but no control at all on the injectivity radii. As a consequence, we obtain a stability result for the absolutely continuous spectrum under a Ricci flow. 
\end{abstract}

%

\section{Introduction}

As the (unique self-adjoint realization in $L^2(M,g)$ of the) Laplace-Beltrami operator $-\Delta_g\geq 0$ on a noncompact geodesically complete Riemannian manifold $(M,g)$ typically contains some continuous spectrum, a natural question that arises is to what extent one can control at least certain parts of the continuous spectrum. A particular decomposition of the spectrum $\mathrm{spec}(-\Delta_g)$ is given (cf. \cite{Re3, weidmann2} and the appendix of this paper) by
$$
\mathrm{spec}(-\Delta_g)=\mathrm{spec}_{\mathrm{ac}}(-\Delta_g)\bigcup \mathrm{spec}_{\mathrm{sc}}(-\Delta_g)\bigcup \mathrm{spec}_{\mathrm{pp}}(-\Delta_g),
$$
where 
\begin{itemize}
\item $\mathrm{spec}_{\mathrm{ac}}(-\Delta_g)$ denotes the absolutely continuous spectrum (cf.~Appendix~\ref{wave}).
\item $\mathrm{spec}_{\mathrm{sc}}(-\Delta_g)$ the singular continuous spectrum,
\item $\mathrm{spec}_{\mathrm{pp}}(-\Delta_g)$ the pure point spectrum,
\end{itemize}
so that
$$
\mathrm{spec}_{c}(-\Delta_g)= \mathrm{spec}_{\mathrm{ac}}(-\Delta_g)\bigcup \mathrm{spec}_{\mathrm{sc}}(-\Delta_g)
$$
is the whole continuous spectrum. The absolutely continuous spectrum corresponds to the quantum dynamics in the following sense: by definition, $\mathrm{spec}_{\mathrm{ac}}(-\Delta_g)$ is the spectrum of the restriction of $-\Delta_g$ to the closed subspace of $L^2(M,g)$ given by absolutely continuous states $\psi$ corresponding to $-\Delta_g$. But for those $\psi$\rq{}s the RAGE theorem \cite{Re3, weidmann2} shows 
\begin{align}\label{abso}
\lim_{|t|\to\infty} 1_{K}\exp(it\Delta_g)\psi =0\quad\text{ in $L^2(M,g)$ for every compact $K\subset M$.}
\end{align}
As by Schrödinger\rq{}s equation $\exp(it\Delta_g)\psi$ is the state at the time $t$ given that the initial state was $\psi$, property (\ref{abso}) shows that the quantum particle eventually leaves every compact set, if the initial state was an absolutely continuous one.\smallskip

 A perturbative approach to control $\mathrm{spec}(-\Delta_g)$ is provided by the machinery of $2$-Hilbert-space scattering theory: namely, assume that $h$ is another Riemannian metric on $M$ which is quasi-isometric to $g$ and whose absolutely continuous spectrum is known. Then, with the trivial identification map  
$$
I_{g,h}: \IL^2(M,g)\longrightarrow \IL^2(M,h),\>\>f\longmapsto f,
$$
one can ask whether the $2$-Hilbert-space wave operators 
$$
W_{\pm}(-\Delta_{h},-\Delta_{g}, I_{g,h})=\slim_{t\to\pm\infty}\exp(-it\Delta_{h})I_{g,h}\exp(it\Delta_{g})P_{\mathrm{ac}}(-\Delta_g)
$$
exist and are complete (cf. section \ref{wave} for the precise definitions), the point being that the latter property implies
$$
\mathrm{spec}_{\mathrm{ac}}(-\Delta_g)=\mathrm{spec}_{\mathrm{ac}}(-\Delta_h),
$$
and one has managed to transfer the known spectral information from $-\Delta_h$ to $-\Delta_g$. The current state of the art concerning criteria for the existence and completeness of the $W_{\pm}(-\Delta_{h},-\Delta_{g}, I_{g,h})$ is the main result from \cite{HPW}. There, Hempel, Post and Weder prove the following result:\smallskip

 \emph{Assume that for both $j\in \{g,h\}$ one has
\begin{align}\label{hpw}
\int  \gamma_j(x) \delta_{g,h}(x)d\mu_j(x)<\infty,
\end{align}
where 
\begin{itemize}
\item $\mu_j$ is the Riemannian volume measure,
\item $\gamma_j:M\to [0,\infty)$ is a certain explicitly function which depends in a monotonically decreasing way on a local lower bound of the injectivity radius and in a monotonically increasing way on a local lower bound on the Ricci curvature $\mathrm{Ric}_j$,
\item $\delta_{g,h}:M\to [0,\infty)$ denotes a certain zeroth order deviation of the metrics from each other (cf. section \ref{mainz} below for the definition). 
\end{itemize}
Then the $W_{\pm}(-\Delta_{h},-\Delta_{g}, I_{g,h})$ exist and are complete.}\smallskip

An important feature of this result is that no assumptions on the geometry of $(M,g)$ and $(M,h)$ are imposed, only the deviation of $g$ from $h$ matters, as it should be in scattering theory. The above result has considerably improved an earlier result by Müller and Salomonsen~\cite{salo}, where instead of the zeroth order deviation, the authors had to weight their integral condition with a much stronger second order deviation, in addition to assuming both metrics to have a $C^{\infty}$-bounded geometry.

Nevertheless, a certain drawback of the result from \cite{HPW} is that injectivity radii are very hard to calculate or even to control in general. In any case, one needs a very detailed control on the sectional curvatures, to get some control on the injectivity radius \cite{CGT}. On the other hand, volumes of balls are much more handy and equivalent under quasi-isometry, and in fact any lower bound on the injectivity radius implies a lower bound on the volume function by Bishop-Günther\rq{}s inequality.

In view of these remarks, our main result Theorem \ref{main}, which reads as follows, provides a remarkable improvement: \smallskip

\emph{Assume that $g$ and $h$ are geodesically complete and quasi-isometric Riemannian metrics on $M$ such that for both $j\in\{g,h\}$ and some $s>0$ one has 
 $$
\int \alpha_{j}(x,s) \beta_{j}(x)  \delta_{g,h}(x) d\mu_j(x)<\infty,
$$
where $\alpha_{j}(\cdot,s):M\to [0,\infty)$ is a local upper bound on the heat kernel on $(M,j)$ at the time $s>0$ and $\beta_{j}:M\to [0,\infty)$ is a certain explicitly given local lower bound on $\mathrm{Ric}_j$. Then the wave operators $W_{\pm}(-\Delta_{h},-\Delta_{g}, I_{g,h})$ exist and are complete.}
\smallskip

Again, no assumptions on the geometry of $(M,g)$ are $(M,h)$ are imposed. While Theorem~\ref{main} can be expected to be disjoint from that of \cite{HPW} in general, under global lower Ricci bounds it can be brought into a form which indeed is much more general and handy then the induced result from \cite{HPW} in the sense of the above remarks. In fact, assuming that both Ricci curvatures are bounded from below by constants, one can use Li-Yau type heat kernel estimates and Theorem \ref{main} boils down to give the following criterion (cf.~Corollary~\ref{lower} below):\smallskip

\emph{Assume that $g$ and $h$ are geodesically complete and quasi-isometric Riemannian metrics on $M$ with $\mathrm{Ric}_j$ bounded from below by a constant for both $j\in \{g,h\}$ and
$$
\int  \mu_j(x,1)^{-1}  \delta_{g,h}(x) d\mu_j(x)<\infty \quad\text{ for some $j\in \{g,h\}$,}
$$
where $\mu_j(x,1)$ denotes the volume of the geodesic ball with radius $1$ centered at $x$ with respect to $(M,j)$. Then the wave operators $W_{\pm}(-\Delta_{h},-\Delta_{g}, I_{g,h})$ exist and are complete.}\smallskip

Note that if $g$ and $h$ are geodesically complete and quasi-isometric Riemannian metrics on $M$ with $\mathrm{Ric}_j$ bounded from below by a constant for both $j\in \{g,h\}$, then \eqref{hpw} requires control on some lower bounds of the injectivity radii, while in our Corollary \ref{lower} this condition is replaced by a more general and much more handy lower control on the volume function. The essential difference between our machinery and the one from \cite{HPW} is that we rely on parabolic techniques, while in \cite{HPW} use elliptic estimates. In fact, our main tool is an $L^2\to L^{\infty}_{\mathrm{loc}} $ estimate for the gradient of the heat semigroup that should be of an independent interest, which is valid on every geodesically complete Riemannian manifold, and which relies on an explicit Bismut type probabilistic formula (cf.~\cite{ArnaudonThalmaier, Thalmaier} and the proof of Theorem~\ref{main2} below).

\smallskip

Finally, it is remarkable that the assumptions in Corollary \ref{lower} are explicit enough to deduce the following stability of absolutely continuous spectra under a Ricci flow, which seems to be first result of its kind (cf.~Corollary~\ref{flow}):\smallskip

\emph{Let $S>0$, $\kappa\in\IR$ and assume that
\begin{itemize}
 \item the family $(g_s)_{s\in [0,S]}$ of Riemannian metric on $M$ evolves under a Ricci type flow 
$$
\partial_s g_s=\kappa \mathrm{Ric}_{g_s}, \quad s\in [0,S],
$$
\item the initial metric $g_0$ is geodesically complete 
\item setting
$$
A(x):=\sup\big\{\left|\mathrm{Ric}_{g_s}(v,v)\right|_{g_s}:s\in [0,S], v\in T_x M, |v|_{g_s}\leq 1\big\},\quad x\in M,
$$
one has 
\begin{align}\label{ricci5}
&\sup_{ x\in M}A(x)<\infty,\\\label{ricci6}
&\int \mu_{g_0}(x,1)^{-1}  \sinh\big( (m/4)|\kappa| A(x)S\big) d\mu_{g_0}(x)<\infty.
\end{align}
\end{itemize}
Then one has $\mathrm{spec}_{\mathrm{ac}}(H_{g_s})=\mathrm{spec}_{\mathrm{ac}}(H_{g_0})$ for all $s\in [0,S]$.} 
\smallskip

Note that assumption (\ref{ricci5}) is natural in this context: for example, a typical short time existence result for Ricci flow by \cite{shi} Shi requires that $g_0 $ is geodesically complete with bounded sectional curvatures, yielding a solution $(g_s)_{s\in [0,S^*)}$ of the Ricci flow equation
$$
\partial_s g_s=-2 \mathrm{Ric}_{g_s}, \quad s\in [0,S^*),
$$
which exists up to a time $S^*>0$ and satisfies 
$$
\sup_{s\in [0,S^*),x\in M}\left|\mathrm{Sec}_{g_r}(x)\right| <\infty.
$$
The latter finiteness clearly implies (\ref{ricci5}) for every $S<S^*$.

\section{Main results}\label{mainz}

Let $M$ be a smooth connected manifold of dimension $m\geq 2$. We stress the fact that we understand all our spaces of functions on $M$ (or more generally, all our spaces of sections in vector bundles over $M$) to be complex-valued. For example, $\Omega^1_{\ICC}(M)$ stands for the smooth complex-valued $1$-forms on $M$, that is, the smooth sections of $T^*M\otimes \IC\to M$, and then 
$$
\Id :\ICC(M)\longrightarrow  \Omega^1_{\ICC}(M)
$$
 stands for the complexification of the usual exterior derivative. It will be convenient to set
\begin{align*}
&\mathscr{M}(M):=\big\{\text{smooth Riemannian metrics on $M$}\big\},\\
&\widetilde{\mathscr{M}}(M):=\big\{g\in \mathscr{M}(M): \text{$g$ is geodesically complete}\big\}.
\end{align*}

 Given $g\in \mathscr{M}(M)$ we denote by $\mathrm{Ric}_g$ its Ricci curvature, and by $\mu_g$ the Riemannian volume measure, by $B_g(x,r)$ the open geodesic balls, and by $\mu_g(x,r):=\mu_g(B_g(x,r))$ the volume function. The induced metric on $T^*M$ will be denoted by $g^*$. Complexifications of these data will be denoted by $g_{\IC}$ etc.

The complex Hilbert space $L^2(M,g)$ is given by $\mu_g$-equivalence classes of Borel functions $f:M\to\IC$ with $\int |f|^2 d\mu_g$ finite, and
$$
\left\langle \psi_1,\psi_2\right\rangle_{L^2(M,g)}=\int \overline{\psi_1}\psi_2d\mu_g. 
$$
The complex Sobolev space $W^{1,2}_0(M,g)\subset L^2(M,g)$ is defined to be the closure of $C^{\infty}_c(M)$ with respect to the scalar product
\begin{align}\label{sob}
 \left\langle \psi_1,\psi_2 \right\rangle_{W^{1,2}_0(M,g)}=\big(\int  \overline{\psi_1}\psi_2 \Id\mu_g\big)^{1/2}+ \big(\int g^*_{\IC} (\Id\psi_1,d\psi_2) \Id\mu_g\big)^{1/2},\quad \psi\in C^{\infty}_c(M).
\end{align}
Let $H_{g}\geq 0$ denote the Friedrichs realization of the Laplace-Beltrami operator $- \Delta_{g}\geq 0$ in $\IL^2(M,g)$. We will also need the operator $\Id_g$, which denotes the minimal extension of the exterior differential $\Id$ with respect to $g$. In other words, $\Id_g$ is the closed unbounded operator from $\IL^2(M,g)$ to $\Omega^{1}_{\IL^2}(M,g)$ which is defined by $\dom(\Id_g)=W^{1,2}_0(M,g)$, and $\Id_gf:=\Id f$, in the distributional sense. In fact, one has $H_g=  \Id_g^{*}\Id_g$. If $g$ is geodesically complete, then $H_g$ is essentially self-adjoint on $C^{\infty}_{\c}(M)$.  

Given $g,h\in\mathscr{M}(M)$, we can define a smooth vector bundle morphism 
\begin{align*}
\mathscr{A}_{g,h}:\IT^* M\otimes \IC\longrightarrow  \IT^*  M\otimes \IC,\>\>g^*_{\IC}(\mathscr{A}_{g,h}(x)\alpha ,\beta  ) :=h^*_{\IC}(\alpha ,\beta)\>,\>\>\alpha,\beta \in T^*_xM\otimes \IC, \>x\in M.
\end{align*}
The endomorphism $\mathscr{A}_{g,h}$ is fiberwise self-adjoint with respect to $g^*_{\IC}$, in view of 
$$
g^*_{\IC}(\mathscr{A}_{g,h}(x)\alpha,\alpha )\in\IR\quad\text{ for all $x\in M$, $\alpha\in T^*_xM\otimes \IC$}. 
$$
In addition, $\mathscr{A}_{g,h}$ has fiberwise strictly positive eigenvalues. We further define 
\begin{align*}
\delta_{g,h}:M\longrightarrow [0,\infty),\quad\delta_{g,h}(x):=2\sinh\big((m/4)\max_{\lambda \in\mathrm{spec}(\mathscr{A}_{g,h}(x))}|\log (\lambda) |\big).
\end{align*}
The function $\delta_{g,h}$ measures a $0$-th order deviation of the metrics when we consider them as multiplicative perturbations of each other. We have 
$$
\Id \mu_{h}=\rho_{g,h}\Id \mu_{g} \quad\text{ with a Radon-Nikodym density $0<\rho_{g,h}\in C^{\infty}(M)$}, 
$$
where we record the following simple facts:
\begin{align}\label{apos}
\rho_{h,g}=1/\rho_{g,h},\quad \IAA_{h,g}=\IAA_{g,h}^{-1},\quad \rho_{g,h}=\mathrm{det}(\IAA_{g,h})^{-\f{1}{2}},\quad \delta_{g,h}=\delta_{h,g}.
\end{align}

We write $g\sim h$, if $h$ is quasi-isometric to $g$, that is, if there exists a constant $C\geq 1$ such that 
$$
(1/C)g \leq h \leq Cg \quad\text{ pointwise, as bilinear forms.}
$$
Let us see how these definitions works in the case of conformal perturbations:

\begin{Example} Assume $h= \exp(-(4/m)\phi)g$ for some smooth function $\phi:M\to \IR$, that is, $h$ is a conformal perturbation of $g$. Then one has $h^*= \exp((4/m)\phi)g^*$ and $g\sim h$ holds if and only if $\phi$ is bounded, and then one has $\delta_{g,h}=2\sinh(|\phi|)$. The scattering theory of conformal perturbations has been studied in detail in \cite{BGM}.
\end{Example}

So assume $g\sim h$ for the moment. Then there exists the trivial bounded linear and bijective identification operator
$$
I_{g,h}: \IL^2(M,g)\longrightarrow \IL^2(M,h),\>\>f\longmapsto f,
$$
and one has
\begin{align} \label{apos2}
0<\inf\rho_{g,h}\leq \sup\rho_{g,h}<\infty,\quad\sup\delta_{g,h}<\infty. 
\end{align}
Furthermore, the operator $I^*_{g,h}$ is given by the bounded multiplication operator
\begin{align}\label{adjoint}
I^*_{g,h}: \IL^2(M,g)\longrightarrow \IL^2(M,h),\quad I^*_{g,h}f(x)=\rho_{g,h}(x)f(x).
\end{align}



For every $g\in \widetilde{\mathscr{M}}(M)$, with
$$
(P^g_t)_{t>0}:=(\exp(-tH_g))_{t>0}\subset \ILL(\IL^2(M,g))
$$
the heat semigroup defined by the spectral calculus\footnote{In the sequel, whenever a Borel equivalence class of $L^2$-functions on $(M,g)$ has a smooth representative, we implicitly take the latter.}, and $(x,s)\in M\times (0,\infty)$, we define the finite quantities
\begin{align*}
&\Psi_{1,g}(x):=\max\Big(0,\inf \big\{C\in \IR:\mathrm{Ric}_g(v,v) \geq C |v|_g^2   \text{ \rm for all $y\in B_g(x,1/2)$, $v\in T_yM$} \big\}\Big),\\
&\Psi_{2,g}(x):= \pi^2(m+3) +\pi \sqrt{\Psi_{1,g}(x)(m-1)}+4\Psi_{1,g}(x),\\
&\Psi_{3,g}(x,s):=   \Psi_{2,g}(x)\big(1-\exp(-\Psi_{2,g}(x)s)\big)^{-1},\\
&\Psi_{4,g}(x,s):=\sup_{y\in M}P^g_s(x,y).
\end{align*}

While it is well-known \cite{buch} that $\Psi_{4,g}(x,s)<\infty$ for all $(x,s)\in M\times (0,\infty)$, one can even prove \cite{gueneysubook}
$$
\sup_{x\in K}\Psi_{4,g}(x,s)<\infty\quad\text{ for all $s\in (0,\infty)$, $K\subset M$ compact.}
$$



Here comes our main result:

\begin{Theoreme}\label{main} Assume that $g,h\in\widetilde{\mathscr{M}}(M)$ satisfy $g\sim h$ and that for some $s\in (0,\infty)$ and both $j\in\{g,h\}$ one has 
 $$
\int \Psi_{3,j}(x,s) \Psi_{4,j}(x,s)  \delta_{g,h}(x) d\mu_j(x)<\infty.
$$
Then the wave operators 
$$
W_{\pm}(H_{h},H_{g}, I_{g,h})=\slim_{t\to\pm\infty}\exp(itH_{h})I_{g,h}\exp(-itH_{g})P_{\mathrm{ac}}(H_g)
$$
exist and are complete (cf.~Theorem \ref{belo} for the definition of completeness). 

Moreover, $W_{\pm}\big(H_{h},H_g, I_{g,h}\big)$ are partial isometries with inital space $\mathrm{Ran} \: P_{\mathrm{ac}}(H_g)$ and final space $\mathrm{Ran} \: P_{\mathrm{ac}}(H_h)$, and one has $\mathrm{spec}_{\mathrm{ac}}(H_g)=\mathrm{spec}_{\mathrm{ac}}(H_h)$.
\end{Theoreme}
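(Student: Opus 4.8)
The plan is to deduce Theorem~\ref{main} from an abstract $2$-Hilbert-space scattering criterion of Belopol'skii--Birman type (cf.~Theorem~\ref{belo}), whose hypotheses are: (i) $I_{g,h}$ is bounded with bounded generalized inverse; (ii) the ``sandwiched'' difference of resolvents (or of heat semigroups) is trace class on a suitable spectral subspace; and (iii) an asymptotic compatibility condition relating $I_{g,h}$ to the two operators. Since $g\sim h$, item (i) is already recorded in \eqref{apos2}--\eqref{adjoint}. The real work is to produce a trace-class (or at least a decay-in-$t$, hence existence-and-completeness) estimate for an operator of the form $\exp(-tH_h)I_{g,h} - I_{g,h}\exp(-tH_g)$, or equivalently for the commutator of $I_{g,h}$ with the generator, tested against the heat semigroups. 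The first step, therefore, is an algebraic one: compute the ``defect'' operator $D:=H_h I_{g,h} - I_{g,h} H_g$ in the form sense, and show that it factors through first-order operators with coefficients controlled pointwise by $\delta_{g,h}$ and the density $\rho_{g,h}$ and the endomorphism $\mathscr{A}_{g,h}$ — this is where the explicit definition of $\delta_{g,h}$ in terms of $\sinh$ of $\tfrac{m}{4}|\log\lambda|$ enters, and where \eqref{apos} is used.

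Next I would reduce the trace-class estimate to an $L^2\to L^\infty_{\mathrm{loc}}$ bound on the gradient of the heat semigroup. Concretely, writing the relevant difference of semigroups via Duhamel against $D$, one is led to estimate $\||\nabla^g P^g_s f|\|$ locally, and symmetrically on $(M,h)$. The key input is the Bismut-type probabilistic formula for $\nabla P^g_s$ (the $L^2\to L^\infty_{\mathrm{loc}}$ gradient estimate advertised in the introduction, to be proved as Theorem~\ref{main2}), which on any geodesically complete manifold yields a pointwise bound of the shape
$$
|\nabla^g P^g_s f|(x) \le C\,\Psi_{3,g}(x,s)^{1/2}\,\Big(\int P^g_s(x,y)\,|f(y)|^2\,d\mu_g(y)\Big)^{1/2}
\le C\,\Psi_{3,g}(x,s)^{1/2}\,\Psi_{4,g}(x,s)^{1/2}\,\|f\|_{L^2},
$$
so that the function $x\mapsto \Psi_{3,g}(x,s)\Psi_{4,g}(x,s)$ is exactly the weight governing the operator norm of $\nabla^g P^g_s$ into a weighted $L^\infty$. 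Here $\Psi_{1,g}$ supplies the local lower Ricci bound needed by the Bismut formula, $\Psi_{2,g},\Psi_{3,g}$ package the Gronwall-type constant from integrating the curvature term along Brownian motion, and $\Psi_{4,g}$ is the local heat-kernel sup bound. Combining this with the factorization of $D$ through $\delta_{g,h}$, the integrability hypothesis $\int \Psi_{3,j}\Psi_{4,j}\,\delta_{g,h}\,d\mu_j<\infty$ for $j\in\{g,h\}$ is precisely what makes the sandwiched operator trace class (one side for existence, the other for completeness, symmetry being built into the hypothesis by requiring it for \emph{both} $j$).

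Finally, once existence and completeness of $W_\pm(H_h,H_g,I_{g,h})$ are established, the statements that they are partial isometries with initial space $\operatorname{Ran}P_{\mathrm{ac}}(H_g)$ and final space $\operatorname{Ran}P_{\mathrm{ac}}(H_h)$, and the equality $\operatorname{spec}_{\mathrm{ac}}(H_g)=\operatorname{spec}_{\mathrm{ac}}(H_h)$, are formal consequences of the general theory of $2$-Hilbert-space wave operators together with the intertwining property $W_\pm H_g = H_h W_\pm$ — this is standard and I would cite Appendix~\ref{wave}. The main obstacle, and the technical heart of the argument, is the first part: turning the pointwise Bismut gradient estimate into a genuinely \emph{local} operator bound (uniformity on compacta, using $\sup_{x\in K}\Psi_{4,g}(x,s)<\infty$) and then globalizing it into a trace-class statement via the weight $\Psi_{3,j}\Psi_{4,j}\delta_{g,h}$; in particular one must be careful that no injectivity-radius quantity sneaks in — the whole point is that the probabilistic formula only sees a lower Ricci bound and the heat kernel, never the injectivity radius — and that the two manifolds $(M,g)$ and $(M,h)$ are treated symmetrically despite $I_{g,h}$ being manifestly asymmetric (this is handled using \eqref{apos} and \eqref{adjoint}).
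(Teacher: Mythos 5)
Your proposal follows essentially the same route as the paper: verify the Belopol'skii--Birman hypotheses, factor the sandwiched form-defect $\left\langle H_hf_h,P^h_sI_{g,h}P^g_sf_g\right\rangle-\left\langle f_h,P^h_sI_{g,h}P^g_sH_gf_g\right\rangle$ through first-order operators weighted pointwise by $\delta_{g,h}$ (the paper's HPW formula), and use the Bismut-type gradient estimate of Theorem~\ref{main2} to show each factor is Hilbert--Schmidt, whence the product is trace class. The only points worth flagging are minor: the hypothesis for \emph{both} $j\in\{g,h\}$ is needed because the two Hilbert--Schmidt factors of the single trace-class operator live over $(M,g)$ and $(M,h)$ respectively (not because one metric handles existence and the other completeness), and you should also record the compactness of $(I_{g,h}^*I_{g,h}-1)P^g_s$, which follows from the same kernel estimates.
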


Note that the assumptions and the conclusions of Theorem \ref{main} are symmetric in $(g,h)$. The ultimate definition of the functions $\Psi_{j,g/h}$ is dictated by the bound from Theorem \ref{main2} below.

In case the Ricci curvatures are bounded from below by constants, Theorem \ref{main} can be brought into the following convenient form:


\begin{Corollarye}\label{lower} Assume that $g,h\in\widetilde{\mathscr{M}}(M)$ satisfy the following assumptions,
\begin{itemize}
\item $g\sim h$,
\item $\mathrm{Ric}_j$ is bounded from below by a constant for both $j\in\{g,h\}$, 
\item there exists $j\in\{g,h\}$ with
 $$
\int  \mu_j(x,1)^{-1}  \delta_{g,h}(x) d\mu_j(x)<\infty .
$$
\end{itemize}
Then the wave operators 
$$
W_{\pm}(H_{h},H_{g}, I_{g,h})=\slim_{t\to\pm\infty}\exp(itH_{h})I_{g,h}\exp(-itH_{g})P_{\mathrm{ac}}(H_g)
$$
exist and are complete. Moreover, $W_{\pm}\big(H_{h},H_g, I_{g,h}\big)$ are partial isometries with inital space $\mathrm{Ran} \: P_{\mathrm{ac}}(H_g)$ and final space $\mathrm{Ran} \: P_{\mathrm{ac}}(H_h)$, and one has $\mathrm{spec}_{\mathrm{ac}}(H_g)=\mathrm{spec}_{\mathrm{ac}}(H_h)$. 
\end{Corollarye}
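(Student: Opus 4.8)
The plan is to deduce Corollary~\ref{lower} directly from Theorem~\ref{main} by verifying that, under the stated hypotheses, one can choose a fixed time $s\in(0,\infty)$ such that the integrand $\Psi_{3,j}(x,s)\,\Psi_{4,j}(x,s)\,\delta_{g,h}(x)$ is controlled pointwise by a constant multiple of $\mu_j(x,1)^{-1}\,\delta_{g,h}(x)$, uniformly in $x$. First I would observe that, since $g\sim h$, the three conditions are essentially symmetric: quasi-isometry gives $c^{-1}\mu_g(x,r)\leq \mu_h(x,r)\leq c\,\mu_g(x,r)$ and comparable geodesic distances, while \eqref{apos} and \eqref{apos2} give $\delta_{g,h}=\delta_{h,g}$ and $0<\inf\rho_{g,h}\leq\sup\rho_{g,h}<\infty$; hence $\int\mu_g(x,1)^{-1}\delta_{g,h}\,d\mu_g<\infty$ if and only if $\int\mu_h(x,1)^{-1}\delta_{g,h}\,d\mu_h<\infty$, so it suffices to treat each $j\in\{g,h\}$ separately with its own integral.

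Next, fix $j$ and let $\Ric_j\geq -K_j g_j$ globally for some constant $K_j\geq 0$. Then for every $x$ the local quantity $\Psi_{1,j}(x)$ is bounded above by $K_j$ (it is the smallest constant $C$ with $\Ric_j\geq C g_j$ on $B_j(x,1/2)$, which is $\leq K_j$, truncated below at $0$), so $\Psi_{2,j}(x)\leq \pi^2(m+3)+\pi\sqrt{K_j(m-1)}+4K_j =: c_{1,j}$ is a finite constant independent of $x$. Now choose and fix $s=1$ (any fixed positive time works). The function $u\mapsto u(1-e^{-u})^{-1}$ is increasing on $(0,\infty)$, and bounded on $(0,c_{1,j}]$, so $\Psi_{3,j}(x,1)=\Psi_{2,j}(x)(1-e^{-\Psi_{2,j}(x)})^{-1}\leq c_{2,j}$ for a constant $c_{2,j}$ depending only on $m$ and $K_j$. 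Thus $\Psi_{3,j}(x,1)$ drops out as a harmless uniform constant, and what remains is to bound $\Psi_{4,j}(x,1)=\sup_{y\in M}P^j_1(x,y)$ by a constant times $\mu_j(x,1)^{-1}$.

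This last step is the main obstacle and the real content of the reduction. The plan is to invoke Li–Yau / Davies–Gaffney type on-diagonal heat kernel upper bounds: under a global lower Ricci bound $\Ric_j\geq -K_j g_j$ on a geodesically complete manifold, there is a Gaussian upper bound of the form $P^j_t(x,y)\leq C(m,K_j,t)\,\mu_j(x,\sqrt t)^{-1}\exp(-d_j(x,y)^2/(5t))$ (see e.g.\ Li–Yau, Grigor'yan, Saloff-Coste); specializing to $t=1$ and taking the supremum over $y$ gives $\Psi_{4,j}(x,1)\leq C(m,K_j)\,\mu_j(x,1)^{-1}$. Here one must be slightly careful, as the cleanest such bounds are usually stated under a volume-doubling plus Poincar\'e hypothesis, which on a manifold with lower Ricci bound holds only locally with constants that may degrade with the radius; for the fixed radius $1$ this is still enough, and alternatively one can use the Cheeger–Gromov–Taylor style bound $P^j_t(x,x)\leq C/\mu_j(x,\sqrt t)$ valid for $t$ bounded above. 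Combining the three estimates, the Theorem~\ref{main} integrand at $s=1$ is $\leq c_{2,j}\,C(m,K_j)\,\mu_j(x,1)^{-1}\delta_{g,h}(x)$, whose integral against $d\mu_j$ is finite by hypothesis for the chosen $j$; by the symmetry discussed above it is then finite for both $j\in\{g,h\}$, so Theorem~\ref{main} applies and yields the existence, completeness, partial-isometry structure, and the equality $\mathrm{spec}_{\mathrm{ac}}(H_g)=\mathrm{spec}_{\mathrm{ac}}(H_h)$.
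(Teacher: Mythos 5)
Your proposal is correct and follows essentially the same route as the paper: reduce to Theorem~\ref{main} at the fixed time $s=1$, note that the global lower Ricci bound makes $\Psi_{3,j}(\cdot,1)$ uniformly bounded, and control $\Psi_{4,j}(x,1)$ by $C\,\mu_j(x,1)^{-1}$ via a Li--Yau type heat kernel upper bound (the paper cites Sturm for exactly this), with the quasi-isometry handling the symmetry in $j$. Your caveat about doubling/Poincar\'e is resolved exactly as you suggest, since the Li--Yau bound holds under a global lower Ricci bound alone.
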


\begin{proof} Firstly note that if one has
 $$
\int  \mu_j(x,1)^{-1}  \delta_{g,h}(x) d\mu_j(x)<\infty 
$$
for some $j\in \{g,h\}$, then by quasi-isometry the same is true for both $j\in \{g,h\}$.

One trivially has 
$$
\sup_{x\in M}\Psi_{3,j}(x,1)<\infty
$$
and by a Li-Yau type heat kernel estimate \cite{sturm} there exist $a,b\in (0,\infty)$, which only depend on the lower Ricci bounds and $m$, such that for all $x\in M$ one has $$\Psi_{4,j}(x,1)\leq a\mu_j(x,1)^{-1}\exp(b),$$ so that the claim follows from Theorem \ref{main}.
\end{proof}

Corollary \ref{lower} implies the following result concerning the stability of the absolutely continuous spectrum of a family of metrics that evolve under a Ricci flow, as long as the initial metric has a bounded Ricci curvature:

\begin{Corollarye}\label{flow} Let $S>0$, $\kappa\in\IR$ and assume that
\begin{itemize}
 
\item the family $(g_s)_{s\in [0,S]}\subset \IMM(M)$ evolves under a Ricci type flow 
$$
\partial_s g_s=\kappa \mathrm{Ric}_{g_s}, \quad s\in [0,S],
$$
\item the initial metric $g_0$ is geodesically complete 
\item setting
$$
A(x):=\sup\big\{\left|\mathrm{Ric}_{g_s}(v,v)\right|_{g_s}:s\in [0,S], v\in T_x M, |v|_{g_s}\leq 1\big\},\quad x\in M,
$$
one has 
\begin{align}\label{ricci1}
& \sup_{ x\in M}A(x)<\infty,\\\label{ricci2}
&\int \mu_{g_0}(x,1)^{-1}  \sinh\big( (m/4)\,S\,|\kappa| A(x)\big) d\mu_{g_0}(x)<\infty.
\end{align}
\end{itemize}
Then one has $\mathrm{spec}_{\mathrm{ac}}(H_{g_s})=\mathrm{spec}_{\mathrm{ac}}(H_{g_0})$ for all $s\in [0,S]$.
\end{Corollarye}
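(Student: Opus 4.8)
The plan is to deduce everything from Corollary \ref{lower}, applied for each fixed $s\in[0,S]$ to the pair $g:=g_0$ and $h:=g_s$. So I would need to verify its three hypotheses: (i) $g_0\sim g_s$, which together with geodesic completeness of $g_0$ also places both metrics in $\widetilde{\IMM}(M)$; (ii) $\mathrm{Ric}_{g_0}$ and $\mathrm{Ric}_{g_s}$ are bounded below by constants; (iii) $\int\mu_{g_0}(x,1)^{-1}\delta_{g_0,g_s}(x)\,d\mu_{g_0}(x)<\infty$. All three will follow from a single pointwise estimate controlling how fast the metric can move along the flow.

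First I would fix $x\in M$ and $0\neq v\in T_xM$ and study the smooth positive function $s\mapsto g_s(v,v)$. By the flow equation $\partial_s g_s(v,v)=\kappa\,\mathrm{Ric}_{g_s}(v,v)$, and from the very definition of $A(x)$ (together with homogeneity in $v$) one gets $|\mathrm{Ric}_{g_s}(v,v)|\leq A(x)g_s(v,v)$ for all $s\in[0,S]$, hence $|\partial_s\log g_s(v,v)|\leq|\kappa|A(x)$. Integrating over $[0,s]$ gives the two-sided bound
$$
\exp\!\big(-|\kappa|A(x)s\big)\,g_0(v,v)\ \leq\ g_s(v,v)\ \leq\ \exp\!\big(|\kappa|A(x)s\big)\,g_0(v,v).
$$
Because $A_0:=\sup_{x}A(x)<\infty$ by \eqref{ricci1}, this yields the uniform comparison $\exp(-|\kappa|A_0S)\,g_0\leq g_s\leq\exp(|\kappa|A_0S)\,g_0$, so $g_0\sim g_s$ and, $g_0$ being complete, $g_0,g_s\in\widetilde{\IMM}(M)$; this is (i). Moreover $\mathrm{Ric}_{g_s}(v,v)\geq -A(x)g_s(v,v)\geq -A_0\,g_s(v,v)$, and likewise for $s=0$, which gives (ii) with constant $-A_0$.

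Next I would pass to cotangent spaces: inverting the bilinear forms in the displayed inequality reverses it back to $\exp(-|\kappa|A(x)s)\,g_0^*\leq g_s^*\leq\exp(|\kappa|A(x)s)\,g_0^*$, so every eigenvalue $\lambda$ of $\IAA_{g_0,g_s}(x)$ — these being the generalized eigenvalues of $g_s^*$ relative to $g_0^*$ — satisfies $|\log\lambda|\leq|\kappa|A(x)s\leq|\kappa|A(x)S$. Since $\sinh$ is nondecreasing on $[0,\infty)$, this gives $\delta_{g_0,g_s}(x)\leq 2\sinh\!\big((m/4)|\kappa|A(x)S\big)$, whence by \eqref{ricci2}
$$
\int\mu_{g_0}(x,1)^{-1}\,\delta_{g_0,g_s}(x)\,d\mu_{g_0}(x)\ \leq\ 2\int\mu_{g_0}(x,1)^{-1}\,\sinh\!\big((m/4)\,S\,|\kappa|A(x)\big)\,d\mu_{g_0}(x)\ <\ \infty,
$$
which is (iii). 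Hence Corollary \ref{lower} applies to $(g_0,g_s)$ and yields $\mathrm{spec}_{\mathrm{ac}}(H_{g_0})=\mathrm{spec}_{\mathrm{ac}}(H_{g_s})$ for the fixed $s$; letting $s$ range over $[0,S]$ finishes the proof.

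I do not expect a serious obstacle here: the only genuine computation is the Gr\"onwall step, which rests on the differential inequality $|\partial_s g_s(v,v)|\leq|\kappa|A(x)g_s(v,v)$ (immediate from the definition of $A$) and on $s\mapsto g_s(v,v)$ being $C^1$ and positive on the compact interval $[0,S]$ (built into the assumption that $(g_s)_{s}\subset\IMM(M)$ solves the flow). Everything after that — dualizing the metric comparison, monotonicity of $\sinh$, and invoking Corollary \ref{lower} — is routine bookkeeping.
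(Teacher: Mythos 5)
Your proposal is correct and follows essentially the same route as the paper: Gronwall applied to $\partial_s g_s(v,v)=\kappa\,\mathrm{Ric}_{g_s}(v,v)$ gives the quasi-isometry $g_0\sim g_s$ (hence completeness of every $g_s$), a second comparison on the cotangent level bounds the eigenvalues of $\IAA_{g_0,g_s}(x)$ by $\exp(\pm|\kappa|A(x)S)$ so that $\delta_{g_0,g_s}(x)\leq 2\sinh\big((m/4)S|\kappa|A(x)\big)$, and Corollary \ref{lower} then finishes the argument. Your packaging is marginally cleaner in two spots — you handle both signs of $\kappa$ at once via $|\partial_s\log g_s(v,v)|\leq|\kappa|A(x)$ and you obtain the cotangent comparison by dualizing rather than rerunning Gronwall on $g_0^*(\IAA_{g_0,g_s}\alpha,\alpha)$, and you also make explicit the verification of the Ricci lower-bound hypothesis of Corollary \ref{lower}, which the paper leaves implicit — but these are cosmetic differences, not a different proof.
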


\begin{proof} Let $s\in [0,S]$. It is well-known that the Ricci flow equation in combination with (\ref{ricci1}) imply $g_s\sim g_0$, so that in particular all $g_s$ are geodesically complete \cite{topping}. We give the simple proof for the convenience of the reader: Set 
$$
A:=\sup_{  x\in M}A(x)
$$
 and assume $x\in M, v\in T_xM$. If $\kappa>0$ we have
\begin{align*}
&\partial_s g_s(v,v)=\kappa \mathrm{Ric}_{g_s}(v,v)\leq \kappa A g_s(v,v),\\
&\partial_s g_s(v,v)=\kappa \mathrm{Ric}_{g_s}(v,v)\geq -\kappa A g_s(v,v),
\end{align*}
so that from Gronwall\rq{}s Lemma we get
\begin{align*}
&g_s(v,v)\leq \exp(sA\kappa )g_0(v,v),\\
&g_s(v,v)\geq \exp(-sA\kappa )g_0(v,v).
\end{align*}
In case $\kappa<0$ we have
\begin{align*}
\partial_s g_s(v,v)=\kappa \mathrm{Ric}_{g_s}(v,v)\leq -\kappa A g_s(v,v),\\
\partial_s g_s(v,v)=\kappa \mathrm{Ric}_{g_s}(v,v)\geq \kappa A g_s(v,v),
\end{align*} 
again Gronwall gives the asserted quasi-isometries.

 It remains to prove that for all $s$ the integrability (\ref{ricci2}) implies 
$$
\int  \mu_{g_0}(x,1)^{-1} \delta_{g_s,g_0}(x) d\mu_{g_0}(x)<\infty .
$$
To this end, assume again $\kappa>0$ first. Given $x\in M$, $\alpha\in T^*_xM$ we have
\begin{align*}
&\partial_s g_0^*(\mathscr{A}_{g_0,g_s}(x)\alpha,\alpha)= \partial_s g^*_s(\alpha,\alpha)=\partial_s g_s(\alpha^{\sharp_s},\alpha^{\sharp_s})\\ 
&\leq A(x) \kappa g_s(\alpha^{\sharp_s},\alpha^{\sharp_s}) 
=A(x) \kappa g^*_s(\alpha ,\alpha )
=A(x) \kappa   g_0^*(\mathscr{A}_{g_0,g_s}(x)\alpha,\alpha)
\end{align*}
and likewise
$$
\partial_s g^*_0(\mathscr{A}_{g_0,g_s}(x)\alpha,\alpha)\geq -  A(x)\kappa g_0^*(\mathscr{A}_{g_0,g_s}(x)\alpha,\alpha),
$$
so that by Gronwall one has
$$
|\mathrm{log}(\lambda)| \leq A(x)\kappa s\leq A(x)\kappa S\quad\text{ for all $\lambda \in \mathrm{spec}(\mathscr{A}_{g_0,g_s}(x))=\mathrm{spec}(\mathscr{A}_{g_0,g_s}(x)|_{T^*_xM}).$}
$$
The same proof gives in case $\kappa<0$ the inequality  
$$
|\mathrm{log}(\lambda)| \leq -A(x)\kappa s\leq  -A(x)\kappa S\quad\text{ for all  $\lambda \in \mathrm{spec}(\mathscr{A}_{g_0,g_s}(x))=\mathrm{spec}(\mathscr{A}_{g_0,g_s}(x)|_{T^*_xM})$},
$$
showing altogether that 
$$
\delta_{g_s,g_0}(x)\leq 2\sinh\big( (m/4)\,S\,|\kappa|\, A(x)\big),
$$
and completing the proof.
\end{proof}

The operators
$$
(\widehat{P}^g_s)_{s>0}:=(d_g P^g_s)_{s>0}\subset \ILL\big(\IL^2(M,g), \Omega^1_{L^2}(M,g)\big)
$$
will play a crucial role in proof of Theorem \ref{main}. In fact, the main ingredient of the proof is the parabolic gradient bound for the jointly smooth integral kernel 
$$
(0,\infty)\times M\times M \ni (s,x,y)\longmapsto \widehat{P}^g_s(x,y)\in T^*_x M
$$
of $\widehat{P}^g_s$ from Theorem \ref{main2} below, which is certainly of an independent interest. Note that $(s,x,y)\mapsto \widehat{P}^g_s(x,y)$ is the uniquely determined smooth map such that for all $(s,x)\in (0,\infty)\times M$, $f\in L^2(M,g)$ one has 
$$
\widehat{P}^g_sf(x)=\int \widehat{P}^g_s(x,y)f(y) d\mu_g(y).
$$

\begin{Theoreme}\label{main2} For every $g\in \widetilde{\mathscr{M}}(M)$, $(s,x)\in (0,\infty)\times M$ one has 
$$
\int\Big|\widehat{P}^g_s(x,y)\Big|^2_{g^*}d\mu_g(y)\leq   \Psi_{3,g}(x,s) \Psi_{4,g}(x,s).
$$
\end{Theoreme}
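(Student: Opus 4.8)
The plan is to estimate $|\widehat P^g_s(x,y)|_{g^*}$ pointwise using a probabilistic Bismut-type formula for the gradient of the heat semigroup, then integrate the resulting bound against $d\mu_g(y)$. First I would fix $(s,x)$ and recall the Bismut formula: for $f\in C^\infty_c(M)$ and Brownian motion $X$ on $(M,g)$ started at $x$ with lifetime $\zeta$, one has a representation of the form
$$
d_g(P^g_s f)(x) = -\frac{1}{s}\,\E\Big[ f(X_s)\int_0^s \langle Q_r\,\dot{} \,, \ldots \rangle\,\Big] \quad\text{(schematically)},
$$
more precisely $(d_g P^g_s f)(x)[v] = \E\big[ f(X_s)\, M^v_s\big]$ where $M^v_s$ is a martingale-type correction term built from the damped parallel transport $Q_r$ (solving $DQ_r/dr = -\tfrac12\Ric^{\sharp}(Q_r)$ along the paths) tested against a bounded adapted process of total "mass" $\le 1/s$; see \cite{ArnaudonThalmaier, Thalmaier}. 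The point is that the $L^2$ operator norm of $f\mapsto d_g(P^g_s f)(x)$, i.e. $\|\widehat P^g_s(x,\cdot)\|_{L^2}$, is then controlled by a Cauchy–Schwarz splitting of this expectation.

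The key steps, in order: (1) write $|\widehat P^g_s(x,\cdot)|_{L^2(M,g)}^2 = \sup_{\|f\|_2\le 1} |d_g(P^g_s f)(x)|_{g^*}^2$ and plug in the Bismut formula; (2) apply Cauchy–Schwarz in the expectation to separate the factor $f(X_s)$ from the correction term, yielding a bound of the shape $\E\big[|f(X_s)|^2\big]\cdot\E\big[|\text{correction}|^2\big]$; (3) bound $\E\big[|f(X_s)|^2\big] = \int P^g_s(x,y)|f(y)|^2 d\mu_g(y) \le \Psi_{4,g}(x,s)\,\|f\|_2^2$ using the definition of $\Psi_{4,g}$ as the sup of the heat kernel; (4) bound $\E\big[|\text{correction}|^2\big] \le \Psi_{3,g}(x,s)$ using that the damped parallel transport satisfies $|Q_r|\le \exp\big(\tfrac12\int_0^r \text{(lower Ricci bound along }X\text{)}\big)$, but here one must localize: only the Ricci lower bound on $B_g(x,1/2)$ is available, so one needs a stopping-time argument. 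This is where the constants $\pi^2(m+3)+\pi\sqrt{\Psi_{1,g}(x)(m-1)}+4\Psi_{1,g}(x)$ and the factor $(1-e^{-\Psi_{2,g}(x)s})^{-1}$ come from. The expression $\Psi_{2,g}$ looks exactly like a bound of the form (principal Dirichlet eigenvalue estimate on a half-ball) + (Ricci contribution), which suggests the correct localization is to replace the global Brownian motion by the process killed on exiting $B_g(x,1/2)$ and to compare, using that the difference (the probability of exiting, which is where we lose control of $\Ric$) is itself controlled by a heat-content / exit-time estimate of Cheng–Yau–Li type; the $(1-e^{-\Psi_{2,g}s})^{-1}$ factor is the Duhamel-type price for integrating such an exit bound over $[0,s]$.

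The hard part will be step (4): making the localization of the Ricci lower bound rigorous while keeping the constant explicit. One cannot simply use a global lower Ricci bound (none is assumed), so the correction term $\int_0^s Q_r\,dB_r$-type integral must be split according to whether $X$ has left $B_g(x,1/2)$, and on the bad event one needs a quantitative estimate — presumably using that the expected occupation time of the complement of the half-ball before time $s$, or the exit probability, decays, and that the integrand itself, while not pathwise bounded, is square-integrable with a bound governed by a Dirichlet-eigenvalue constant on the half-ball (hence the $\pi^2(m+3)$ term, reminiscent of a Faber–Krahn or Cheng eigenvalue bound for a ball of radius $1/2$ in dimension $m$). Assembling these pieces to land exactly on $\Psi_{3,g}(x,s)\Psi_{4,g}(x,s)$ — rather than on some cruder product — is the delicate bookkeeping I expect to occupy the bulk of the proof; everything before it (the Bismut formula, Cauchy–Schwarz, the heat-kernel sup bound) is standard and quick.
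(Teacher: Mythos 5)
Your proposal follows essentially the same route as the paper: the localized Bismut-type gradient formula, Cauchy--Schwarz to split off $\E\big[|f(X_s)|^2\big]\le \Psi_{4,g}(x,s)\|f\|_2^2$, an estimate of the martingale correction term by $\Psi_{3,g}(x,s)$, and Riesz--Fischer duality to convert the operator bound into the kernel bound. The only remark worth making is that the localization you worry about in step (4) is already built into the formula itself --- the stochastic integral runs only up to $\tau(x)\wedge s$ with $\tau(x)$ the exit time from $B_g(x,1/2)$ and the interpolating process $\ell$ chosen to vanish at $\tau(x)$ --- so no separate killed-process comparison is needed; the paper simply cites the standard construction of $\ell$ for the explicit constant.
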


\begin{Remark}\label{riesz}
Note that by Riesz-Fischer's duality theorem, the estimate from Theorem \ref{main2} is equivalent to the following statement: For every $g\in \widetilde{\mathscr{M}}(M)$, $(s,x)\in (0,\infty)\times M$, $f\in L^2(M,g)$ one has 
$$
\Big|\widehat{P}^g_sf(x)\Big|_{g^*}\leq \sqrt{\Psi_{3,g}(x,s)  \Psi_{4,g}(x,s)} \left\|f\right\|_{L^2(M,g)}.
$$
\end{Remark}

 \section{Proof of Theorem \ref{main2}}

Here we give the  

\begin{proof}[Proof of Theorem \ref{main2}.]
 
We can omit $g$ in the notation. Let $X(x)$ be a Brownian motion on $M$ starting from $x$ and
$\zeta(x)$ its maximal lifetime. Let us first assume $f$ is real-valued. Then, for $v\in T_xM$ one has the Bismut type formula (cf. Theorem 6.2 in \cite{driver}, Formula (6.2) in \cite{ThalmaierWang98}, \cite{Thalmaier}, \cite{ArnaudonThalmaier})
\begin{equation}\label{Eq:GradFormula}
  \left(dP_{s}f\right)_{x}v=- \mathbb{E}\left[ f(X_s(x))1_{\{s<\zeta(x)\}}\int_{0}^{\tau(x)\wedge s}\big( \Theta_{r}(x)
    \dot{\ell}_{r}(v),dW_{r}(x)\big) \right],
\end{equation}
where
\begin{itemize}
\item $\Theta_r(x)$, $r<\zeta(x)$, is the $\mathrm{Aut}(T_xM)$-valued process defined by
    $$
\frac{d}{d r}\Theta_r(x)=-  \Ric_{\partr r(x)}(\nbull\,,\Theta_r)^\sharp   
    ,\quad \Theta_0(x)=\mathrm{id}_{T_xM},
$$
with $\partr r(x):T_xM\to T_{X_r(x)}M$, $r<\zeta(x)$, the stochastic parallel transport along the paths of $X(x)$;

  \item $\tau(x)<\zeta(x)$ is the first exit time of $X(x)$ from $B(x,1/2)$;
  \item $W(x)$ is a Brownian motion in $T_xM$;
  \item $\ell(v)$ is any adapted process in $T_xM$ with absolutely
    continuous paths such that (for some $\varepsilon>0$)
$$
\ell_0(v)=v, 
\quad\ell_{\tau_{x}}(v)=0\quad \text{and}\quad \mathbb{E}\left[\Bigl(\int_0^{\tau(x)\wedge
  s}\vert\dot\ell_t(v)\vert^2\,dt\Bigr){}^{1/2+1/\epsilon}\right]<\infty.
	$$
\end{itemize}

In fact, the smooth representative of $P_sf(x)$ is given by
$$
P_sf(x)=\E[f(X_s(x) 1_{\{s<\zeta(x)\}}]=\int   P_s(x,y) f(y) d\mu(y).
$$

By Cauchy-Schwarz we obtain
\begin{equation}\label{Eq:GradEst}
  \left|\left(dP_{s}f\right)_{x}v\right|\leq \left(\E\left[ |f|^2(X_s(x)) 1_{\{s<\zeta(x)\}}\right]\right)^{1/2}
  \left(\E\left[\left(\int_{0}^{\tau(x)\wedge s}\big(\Theta_{r}(x)
        \dot{\ell}_{r}(v),dW_{r}(x)\big) \right)^2\right]\right)^{1/2}.
\end{equation}
It is well-known how to estimate the second factor on the right by choosing $\ell(v)$ appropriately,
e.g.~\cite[Sect.~4]{ThalmaierWang2011}. Namely, for $|v|\leq 1$ one can achieve
\begin{equation}
 \E\left[\left(\int_{0}^{\tau(x)\wedge s}\big( \Theta_{r}(x)
        \dot{\ell}_{r}(v),dW_{r}(x)\big) \right)^2\right]   \leq  \Psi_3(x,s) .
\end{equation}

Thus
\begin{align*} 
  \left|\left(dP_{s}f\right)_{x}v\right|&\leq \left(\E\left[ |f|^2(X_s(x)) 1_{\{s<\zeta(x)\}}\right]\right)^{1/2}\sqrt{\Psi_3(x,s)}\\
	&= \left(\int |f(y)|^2 P_s(x,y)d\mu (y)\right)^{1/2} \sqrt{\Psi_3(x,s)}\\
&\leq \sqrt{\Psi_4(x,s)}\sqrt{\Psi_3(x,s)}\left\|f\right \|_{L^2(M)}.
\end{align*}
Using that complexifications are norm preserving and using Remark \ref{riesz}, the latter bound completes the proof.
\end{proof}

\section{Proof of Theorem \ref{main} }

We start by noting that given a diagonizable linear operator $A$ on a finite dimensional linear space and a real-valued function $f$ on the spectrum of $A$, the linear operator $f(A)$ can be defined using the projectors onto the eigenspaces of $A$. In particular, this procedure does not depend on a scalar product, but if $A$ is self-adjoint w.r.t. some scalar product, then the above definition of $f(A)$ is consistent with the spectral calculus.

For example, if we are given $g,h\in\mathscr{M}(M)$ and a point $x\in M$, then 
$$
 \varrho_{g,h}(x)\mathscr{A}_{g,h}(x): T^*_xM\otimes\IC \longrightarrow T^*_xM\otimes\IC
$$
is diagonizable. We define a function and a Borel section in $ T^*M\otimes \IC\to M$ by setting
\begin{align*}
&S_{g,h}:M\longrightarrow \IR, \quad S_{g,h}(x):=\varrho_{g,h}(x)^{1/2}-\varrho_{g,h}(x)^{-1/2},\\
&\widehat{S}_{g,h}:M\longrightarrow \mathrm{End}(\IT^* M\otimes \IC),\quad\widehat{S}_{g,h}(x):=\big(\varrho_{g,h}(x)\mathscr{A}_{g,h}(x)\big)^{1/2}-\big(\varrho_{g,h}(x)\mathscr{A}_{g,h}(x)\big)^{-1/2}.
\end{align*}
One has the elementary bounds (cf. Lemma 3.3 in \cite{HPW})
\begin{align}\label{bound}
\max\big(\left|S_{g,h}(x)\right| , \big|\widehat{S_{g,h}}(x)\big|_{g} ,\big|\widehat{S_{g,h}}(x)\big|_{h} \big)\leq \delta_{g,h}(x)\quad\text{for all $x\in M$},
\end{align}
in particular, the assumption $\sup\delta_{g,h}<\infty$ (which is equivalent to $g\sim h$) implies the boundedness of $S_{g,h}$, $\widehat{S}_{g,h}$. We will need the maximally defined multiplication operators 
\begin{align*}
&S_{g,h;j}:\IL^2(M,j)\longrightarrow \IL^2(M,j),\quad S_{g,h;j}f(x)=|S_{g,h}(x)|^{\f{1}{2}}f(x),\quad j=g,h,\\
&\widehat{S_{g,h;j}}:\Omega^1_{\IL^2}(M,j )\longrightarrow \Omega^1_{\IL^2}(M,j ),\quad \widehat{S_{g,h;j}}f(x)=|\widehat{S_{g,h}}(x)|^{\f{1}{2}}f(x),\quad j=g,h,\\
&U_{g,h}:\IL^2(M,g)\longrightarrow \IL^2(M,h),\quad U_{g,h}f(x)=\big[\mathrm{sgn}(S_{g,h})\varrho_{g,h}^{-\f{1}{2}}\big](x)f(x),\\
&\widehat{U_{g,h}}:\Omega^1_{\IL^2}(M,g )\longrightarrow\Omega^1_{\IL^2}(M,h),\quad \widehat{U_{g,h}}\alpha(x)=\big[\mathrm{sgn}(\widehat{S_{g,h}})(\varrho_{g,h}\IAA_{g,h})^{-\f{1}{2}}\big](x)\alpha(x). 
\end{align*}

The operators $U_{g,h}$, $\widehat{U_{g,h}}$ are always unitary and self-adjoint, and the operators $S_{g,h;j}$, $\widehat{S_{g,h;j}}$ ($j=g,h$) are always self-adjoint and additionally bounded in case $g\sim h$.

The following technical result provides the link between Theorem \ref{main2} and the proof of Theorem \ref{main}. It is a variant of a decomposition formula by Hempel-Post-Weder (cf. Lemma 3.4 from in \cite{HPW}):


\begin{Lemma}[HPW formula] Let $g,h\in\mathscr{M}(M)$ be given with $g\sim h$. Then defining the bounded operator $T_{g,h,s}:\IL^2(M,g)\to \IL^2(M,h)$ by
\begin{align*}
T_{g,h,s}:=\big(\widehat{S_{g,h;h}}\widehat{P}^h_s\big)^{*}\widehat{U_{g,h}}\widehat{S_{g,h;g}}\widehat{P}^g_s-\big(S_{g,h;h} P^h_s\big)^{*}U_{g,h}S_{g,h;g} P^g_{s/2}H_g P^g_{s/2},
\end{align*}
the following formula holds for all $s>0$, $f_h\in \dom (H_h)$, $f_g\in \dom (H_g)$,
\begin{align*}
\left\langle f_h ,T_{g,h,s}f_g\right\rangle_{\IL^2(M,h)} =\left\langle H_hf_h, P^h_s I_{g,h} P^g_sf_g\right\rangle_{\IL^2(M,h)} -\left\langle f_h,P^h_s I_{g,h}P^g_s H_gf_g\right\rangle_{\IL^2(M,h)}.
\end{align*}

\end{Lemma}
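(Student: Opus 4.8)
The plan is to verify the asserted identity by a direct computation, expanding both sides in terms of the multiplication operators and the heat semigroup kernels, and reducing everything to the fundamental relation $H_j = \Id_j^*\Id_j$ together with the explicit formulas \eqref{adjoint} for $I_{g,h}^*$ and the definitions of the various $S$- and $U$-operators. First I would rewrite the right-hand side using the adjoint of $P^h_s$: since $(P^h_s)^* = P^h_s$ (self-adjointness of $H_h$) and $f_h \in \dom(H_h)$, the two terms become $\langle H_h P^h_s f_h, I_{g,h} P^g_s f_g\rangle$ and $-\langle P^h_s f_h, I_{g,h} P^g_s H_g f_g\rangle$. Then, using $H_h P^h_s f_h = \Id_h^* \Id_h P^h_s f_h$ on the first term and $I_{g,h} P^g_s H_g f_g = I_{g,h} P^g_{s/2} H_g P^g_{s/2} f_g$ (semigroup property and commutation of $H_g$ with $P^g_{s/2}$) on the second, the goal is to match these against the two summands of $T_{g,h,s}$.

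The key algebraic identity to isolate is how $I_{g,h}$, conjugated by the exterior derivative, relates the $g$- and $h$-structures: namely one should check the pointwise factorization $\Id_h \circ I_{g,h} \circ \Id_g^{-1}$ (formally) or, more precisely, the relation expressing $g^*_{\IC}(\Id_g u, \Id_g v)$ versus $h^*_{\IC}(\cdot,\cdot)$ through $\mathscr{A}_{g,h}$ and the density $\varrho_{g,h}$, together with the square-root decomposition $\widehat{S_{g,h}} = (\varrho_{g,h}\mathscr{A}_{g,h})^{1/2} - (\varrho_{g,h}\mathscr{A}_{g,h})^{-1/2}$ and its polar-type splitting via $\widehat{U_{g,h}}$, $\widehat{S_{g,h;g}}$, $\widehat{S_{g,h;h}}$ (and the zeroth-order analogues for the scalar part). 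The point is that the combination $\big(\widehat{S_{g,h;h}}\widehat{P}^h_s\big)^* \widehat{U_{g,h}} \widehat{S_{g,h;g}} \widehat{P}^g_s$ is engineered precisely so that $\langle f_h, \cdot\, f_g\rangle$ reproduces the "$1$-form" part of $\langle \Id_h P^h_s f_h, \text{(transported)}\, \Id_g P^g_s f_g\rangle$ minus the undeviated diagonal term, and similarly the scalar summand $\big(S_{g,h;h} P^h_s\big)^* U_{g,h} S_{g,h;g} P^g_{s/2} H_g P^g_{s/2}$ handles the zeroth-order discrepancy coming from $\Id\mu_h = \varrho_{g,h}\Id\mu_g$.

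Concretely, I would compute $\langle \Id_h P^h_s f_h, \Id_h(I_{g,h} P^g_s f_g)\rangle_{\Omega^1_{L^2}(M,h)}$ — which equals $\langle H_h P^h_s f_h, I_{g,h}P^g_s f_g\rangle$ by $H_h = \Id_h^*\Id_h$ and $\Id_h(I_{g,h}u) = \Id_g u$ pointwise (both are just $\Id u$) — and rewrite the $h^*$-inner product of two $g$-exact forms in terms of $g^*$, $\mathscr{A}_{g,h}$ and $\varrho_{g,h}$ (to switch the measure). Subtracting the "trivial" term $\langle \Id_g P^h_s f_h, \Id_g P^g_s f_g\rangle_g$-type expression, which is where the second summand and the $-1$ in $S_{g,h} = \varrho^{1/2}-\varrho^{-1/2}$, $\widehat{S}_{g,h} = (\varrho\mathscr{A})^{1/2}-(\varrho\mathscr{A})^{-1/2}$ come from, one sees both pieces of $T_{g,h,s}$ emerge. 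A parallel, slightly more careful bookkeeping handles the scalar term, using that $H_g$ commutes with $P^g_{s/2}$ and $P^g_{s/2}\Id_g^* = $ (bounded), so that everything is well-defined on $\dom(H_g)$ and $\dom(H_h)$. Finally one checks boundedness of $T_{g,h,s}$: each factor $\widehat{S_{g,h;j}}$, $S_{g,h;j}$ is bounded by \eqref{bound} and \eqref{apos2} under $g\sim h$, the $U$'s are unitary, $\widehat{P}^j_s$, $P^j_s$ are bounded, and $P^g_{s/2}H_g P^g_{s/2}$ is bounded by analyticity of the heat semigroup.

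The main obstacle I expect is the careful tracking of the measure change and of which metric's musical isomorphism/adjoint is being used at each step — in particular keeping straight that $\Id_h^*$ (adjoint in $L^2(M,h)$) applied to a $1$-form differs from $\Id_g^*$ by factors of $\varrho_{g,h}$ and $\mathscr{A}_{g,h}$, and ensuring that the square-root operators split exactly as the $U$- and $S$-factorizations demand (this is a pointwise finite-dimensional spectral-calculus statement, valid because $\varrho_{g,h}\mathscr{A}_{g,h}$ is diagonalizable, as noted just before the lemma). Once the pointwise endomorphism identity
$$
h^*_{\IC}(\alpha,\beta)\,\varrho_{g,h} - g^*_{\IC}(\alpha,\beta) = g^*_{\IC}\big(\widehat{S_{g,h;g}}^2\,\text{-type operator}\,\alpha,\beta\big) \text{ (suitably conjugated)}
$$
is pinned down, the rest is a bounded-operator manipulation with no analytic subtlety beyond the domain considerations already built into the statement.
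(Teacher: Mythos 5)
Your plan is correct and follows essentially the same route as the paper: add and subtract the cross term (equivalently, compare against the "trivial" $g$-diagonal expression), integrate by parts via $H_j=\Id_j^*\Id_j$, switch to the $h$-measure and $h^*$-metric using $\varrho_{g,h}$ and $\mathscr{A}_{g,h}$, and then factor the resulting zeroth- and first-order discrepancies $1-\varrho_{g,h}^{-1}$ and $1-\varrho_{g,h}^{-1}\mathscr{A}_{g,h}^{-1}$ through the $\mathrm{sgn}\cdot|\cdot|^{1/2}\cdot|\cdot|^{1/2}$ splitting before redistributing factors by adjoints. The only part left implicit in your sketch — the exact pointwise factorization and the commutation of the various functions of $\varrho_{g,h}\mathscr{A}_{g,h}$ — is exactly what the paper's displayed chain of equalities carries out, so no new idea is missing.
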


\begin{proof} Adding and subtracting the term $\left\langle I^{-1}_{g,h} P^h_s f_h, H_g P^g_sf_g\right\rangle_{\IL^2(M,g)}$ we get
\begin{align*}
&\left\langle H_hf_h, P^h_s I_{g,h} P^g_sf_g\right\rangle_{\IL^2(M,h)} -\left\langle f_h, P^h_s I_{g,h} P^g_s H_gf_g\right\rangle_{\IL^2(M,h)}\\
&=\left\langle H_hP^h_sf_h,  I_{g,h} P^g_sf_g\right\rangle_{\IL^2(M,h)} -\left\langle I^{-1}_{g,h}P^h_sf_h,   H_g P^g_sf_g\right\rangle_{\IL^2(M,g)}\\
&\quad- \left\langle  P^h_s f_h, \big(I_{g,h}-(I^{-1}_{g,h})^*\big) H_g P^g_sf_g\right\rangle_{\IL^2(M,g)}\\
&=\left\langle \Id_hP^h_sf_h,  \Id_h I_{g,h} P^g_sf_g\right\rangle_{\Omega_{\IL^2}(M,h)} -\left\langle \Id_g I^{-1}_{g,h}P^h_sf_h,   \Id_g P^g_sf_g\right\rangle_{\IL^2(M,g)}\\
&\quad- \left\langle  P^h_s f_h, \big(I_{g,h}-(I^{-1}_{g,h})^*\big) H_g P^g_sf_g\right\rangle_{\IL^2(M,g)}.
\end{align*}
Using (\ref{apos}) and (\ref{adjoint}), the latter is
\begin{align*}
&=\int h^*_{\IC}\Big((1-\varrho_{g,h}^{-1}\IAA_{g,h}^{-1})\Id_h P^h_sf_h,\Id_g P^g_s f_g \Big)  \Id\mu_h-\int \overline{ P^h_s f_h}(1-\varrho_{g,h}^{-1})H_gP^g_s f_g   \Id\mu_h\\
&=\int h^*_{\IC}\Big(\mathrm{sgn}(\widehat{S_{g,h}})(\varrho_{g,h}\IAA_{g,h})^{-\f{1}{2}}|\widehat{S_{g,h}}|^{\f{1}{2}}|\widehat{S_{g,h}}|^{\f{1}{2}}\Id_h P^h_s f_h \ , \ \Id_g P^g_s f_g \Big) \Id\mu_h\\
&\quad-\int \overline{  P^h_s f_h} \cdot \mathrm{sgn}(S_{g,h}) \cdot \varrho_{g,h}^{-\f{1}{2}} \cdot | S_{g,h}|^{\f{1}{2}} \cdot |S_{g,h}|^{\f{1}{2}} \cdot  H_gP^g_s f_g   \Id\mu_h\\
&=\int h^*_{\IC}\Big(\Id_h P^h_s f_h \ , \ \Big[\mathrm{sgn}(\widehat{S_{g,h}})(\varrho_{g,h}\IAA_{g,h})^{-\f{1}{2}}|\widehat{S_{g,h}}|^{\f{1}{2}}|\widehat{S_{g,h}}|^{\f{1}{2}}\Big]^{\dagger_h}\Id_g P^g_s f_g \Big) \Id\mu_h\\
&\quad-\int \overline{  P^h_s f_h}   \mathrm{sgn}(S_{g,h})   \varrho_{g,h}^{-\f{1}{2}}   | S_{g,h}|^{\f{1}{2}}   |S_{g,h}|^{\f{1}{2}}   H_gP^g_s f_g   \Id\mu_h\\
&=\int h^*_{\IC} \Big(\Id_h P^h_s f_h \ , \ |\widehat{S_{g,h}}|^{\f{1}{2}}\mathrm{sgn}(\widehat{S_{g,h}})(\varrho_{g,h}\IAA_{g,h})^{-\f{1}{2}}|\widehat{S_{g,h}}|^{\f{1}{2}}\Id_g P^g_s f_g \Big)  \Id\mu_h\\
&\quad-\int \overline{  P^h_s f_h}   | S_{g,h}|^{\f{1}{2}} \mathrm{sgn}(S_{g,h}) \varrho_{g,h}^{-\f{1}{2}}     |S_{g,h}|^{\f{1}{2}}   H_gP^g_s f_g   \Id\mu_h\\
&= \left\langle  \Id_h P^h_s f_h \ , \ \widehat{S_{g,h;h}} \widehat{U_{g,h}} \widehat{S_{g,h;g}}\Id_g P^g_s f_g   \right\rangle_{\Omega^1_{\IL^2}(M,h)}\\
&\quad- \left\langle  P^h_s f_h,      S_{g,h;h} U_{g,h}S_{g,h;g}  H_gP^g_s f_g    \right\rangle_{\IL^2(M,h)}\\
&= \left\langle   f_h \ , \ (\Id_h P^h_s)^* \widehat{S_{g,h;h}} \widehat{U_{g,h}} \widehat{S_{g,h;g}}\Id_g P^g_s f_g   \right\rangle_{ \IL^2 (M,h)}\\
&\quad- \left\langle   f_h,    P^h_s  S_{g,h;h} U_{g,h}S_{g,h;g} P^g_{s/2} H_g P^g_{s/2} f_g    \right\rangle_{\IL^2(M,h)},
\end{align*}
proving the claimed formula.
 \end{proof}

We can now give the actual proof of Theorem \ref{main}:

\begin{proof}[Proof of Theorem \ref{main}] We are going to check the assumptions of Belopol\rq{}skii-Birman's Theorem (cf. Theorem \ref{belo}):

Firstly, by $g\sim h$, the operator $I:=I_{g,h}$ is well-defined and bounded, with a bounded inverse $I^{-1}= I_{h, g}$. The sesquilinear form corresponding to $H_j$ has domain $W^{1,2}_0(M,j)$ for $j=g,h$, so that in view of (\ref{sob}), (\ref{apos}), (\ref{apos2}), the assumption $g\sim h$ also implies $IW^{1,2}(M,h)=W^{1,2}(M,h)$. 

Next, we claim that $( I^*I-1)P^g_s$ is Hilbert-Schmidt (and thus compact) for some $s>0$. Indeed, by (\ref{adjoint}) the operator $( I^*I-1)P^g_s$ has the integral kernel
\begin{align*}
\left[(I^*I-1)P^g_s  \right](x,y)&=(\varrho_{g,h}(x)-1)P^g_s  (x,y)\\
&=\varrho_{g,h}^{-1/2}\mathrm{sgn}(S_{g,h})|S_{g,h}|^{1/2}|S_{g,h}|^{1/2}P^g_s   (x,y),
\end{align*}
so that using $\int P^g_s  (x,y)d\mu_g(y)\leq 1$ we get the bounds
\begin{align*}
&\int \Big|\left[(I^*I-1)P^g_s  \right](x,y)\Big|^2\Id \mu_g(y)\\
&\leq \left\|\varrho_{g,h}^{-1}\, S_{g,h}\right\|_{\infty} |S_{g,h}(x)|  \int P^g_s  (x,y)^2d\mu_g(y)\\
&\leq \left\|\varrho_{g,h}^{-1}\, S_{g,h}\right\|_{\infty}  |S_{g,h}(x)| \Psi_{4,g}(x,s)\int P^g_s  (x,y)d\mu_g(y)
\end{align*}
for some $s>0$. Using (\ref{bound}) we arrive at the following Hilbert-Schmidt estimate,
\begin{align*}
&\int\int \Big|\left[(I^*I-1)P^g_s  \right](x,y)\Big|^2\Id \mu_g(y)\Id \mu_g(x)\\
&\lesssim\int \delta_{g,h}(x) \Psi_{4,g}(x,s)d\mu_{g}(x)\\
&\lesssim\int \delta_{g,h}(x) \Psi_{4,g}(x,s) \Psi_{3,g}d\mu_{g}(x)<\infty,
\end{align*}
as $\Psi_{4,j}(x,s)\geq 1$. Next, as the product of Hilbert-Schmidt operators is compact, the HPW formula shows that it remains to show that the operators $\widehat{S_{g,h;j}}\widehat{P}^j_s$ and $S_{g,h;j} P^j_s$ are Hilbert-Schmidt, for $j=g,h$. To see this, as $S_{g,h;j} P^j_s$ has the integral kernel
$$
\left[S_{g,h;j} P^j_s\right](x,y)=S_{g,h}(x) P^j_s(x,y),
$$
it follows as above that
\begin{align*}
\int\int \Big|\left[S_{g,h;j} P^j_s\right](x,y)\Big|^2\Id \mu_g(y)\Id \mu_g(x)&=\int\int \Big| S_{g,h}(x) P^j_s(x,y)\Big|^2\Id \mu_g(y)\Id \mu_g(x)\\
&\lesssim\int \delta_{g,h}(x) \Psi_{4,j}(x,s)d\mu_{j}(x)\\
&\lesssim\int \delta_{g,h}(x) \Psi_{4,j}(x,s)\Psi_{3,j}(x,s)d\mu_{j}(x)<\infty.
\end{align*}

Likewise, in order to prove the Hilbert-Schmidt property of $\widehat{S_{g,h;j}}\widehat{P}^j_s$, we use Theorem \ref{main2}:
\begin{align}
\int \Big|\widehat{P}^j_s(x,y)\Big|^2_{j^*} d\mu_j(y)\leq \Psi_{3,j}(x,s) \Psi_{4,j}(x,s) ,
\end{align}
so that in view of (\ref{bound}) one has
\begin{align*}
\int\int\Big|[\widehat{S_{g,h;j}}\widehat{P}^j_s](x,y)\Big|^2_{j^*}d\mu_j(y)d\mu_j(x)&=\int\int\Big|\widehat{S_{g,h}}(x)\widehat{P}^j_s(x,y)\Big|^2_{j^*}d\mu_j(y)d\mu_j(x)\\
&\lesssim \int\delta_{g,h}(x)\int\Big|\widehat{P}^j_s(x,y)\Big|^2_{j^*}d\mu_j(y)d\mu_j(x)\\
&\leq \int\delta_{g,h}(x) \Psi_{3,j}(x,s) \Psi_{4,j}(x,s)  d\mu_{_j}(x).
\end{align*}
This completes the proof.
\end{proof}

\appendix 
\section{Belopol\rq{}skii-Birman theorem}\label{wave}

We recall \cite{Re3, weidmann2} that given a self-adjoint operator $H$  in a Hilbert space $\IHH$ with its operator valued spectral measure $E_H$, one defines the \emph{$H$-absolutely continuous subspace $\IHH_{\mathrm{ac}}(H)$ of $\IHH$} to be the space of all $f\in \IHH$ such that the Borel measure $\left\|E_H(\cdot)f\right\|^2$ on $\IR$ is absolutely continuous with respect to the Lebesgue measure. Then $\IHH_{\mathrm{ac}}(H)$ becomes a closed subspace of $\IHH$ and the restriction $H_{\mathrm{ac}}$ of $H$ to $\IHH_{\mathrm{ac}}(H)$ is a well-defined self-adjoint operator. The \emph{absolutely continuous spectrum $\mathrm{spec}_{\mathrm{ac}}(H)$ of $H$} is defined to be the spectrum of $H_{\mathrm{ac}}$.\smallskip

We record a version of the abstract Belopol\rq{}skii-Birman theorem for two Hilbert space scattering theory, which is well-suited for our purpose:

\begin{Theorem}\label{belo}\emph{(Belopol\rq{}skii-Birman)} For $k=1,2$, let $H_k\geq 0$ be a self-adjoint operator in a Hilbert space $\IHH_k$, where $E_{k}$ denotes the operator valued spectral measure, and and $P_{\mathrm{ac}}(H_k)$ the projection onto the absolutely continuous subspace of $\IHH_k$ corresponding to $H_k$. Assume that $I\in\ILL(\IHH_1, \IHH_2)$ is a bounded operator such that the following assumptions hold:
\begin{itemize}
\item $I$ has a two-sided bounded inverse
\item One has 
\begin{align*}
(I^*I-1)P^g_sH_1) &\in \IJJ^{\infty}(\IHH_1)\>\>\text{ (compact) for some $s>0$}.
\end{align*}

\item  There exists an operator $T\in\IJJ^1(\IHH_1, \IHH_2)$ (trace class) and a number $s>0$ such that for all $f_2\in\dom(H_2)$, $f_1\in\dom(H_1)$ one has
\begin{align*}
\left\langle f_2 ,Tf_1\right\rangle_{\IHH_2}\>=\>
&\left\langle H_2f_2, \exp(-sH_{2}) I \exp(-sH_{1})f_1\right\rangle_{\IHH_2} \\
&-\left\langle f_2, \exp(-sH_{2}) I \exp(-sH_{1}) H_1f_1\right\rangle_{\IHH_2}. 
\end{align*}

\item One has either $I\dom(\widehat{P}_1)=\dom(\widehat{P}_2)$ or $I\dom(H_1)=\dom(H_2)$.

\end{itemize}
Then the wave operators 
$$
W_{\pm}(H_{2},H_1, I)=\slim_{t\to\pm\infty}\exp(itH_{2})I\exp(-itH_{1})P_{\mathrm{ac}}(H_1)
$$
exist\footnote{$\slim_{t\to\pm\infty}$ stands for the strong limit.} and are complete, where completeness means that 
$$
\left(\mathrm{Ker} \: W_{\pm}(H_{2},H_1, I)\right)^{\perp}=\mathrm{Ran}\:  P_{\mathrm{ac}}(H_1), \quad\overline{\mathrm{Ran} \: W_{\pm}(H_{2},H_1, I)}=\mathrm{Ran}\:  P_{\mathrm{ac}}(H_2).
$$
Moreover, $W_{\pm}\big(H_{2},H_1, I\big)$ are partial isometries with inital space $\mathrm{Ran} \: P_{\mathrm{ac}}(H_1)$ and final space $\mathrm{Ran} \: P_{\mathrm{ac}}(H_2)$, and one has $\mathrm{spec}_{\mathrm{ac}}(H_1)=\mathrm{spec}_{\mathrm{ac}}(H_2)$.
\end{Theorem}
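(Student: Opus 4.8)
The plan is to recognize the four hypotheses as the assumptions of the classical two-Hilbert-space trace-class scattering theory of Kato, Rosenblum, Birman and Pearson \cite{Re3, weidmann2}, regularized by the heat semigroups $\exp(-sH_j)$, and to run that machinery in three stages: first for the smoothed identification $K:=\exp(-sH_2)I\exp(-sH_1)$, then transferring the conclusion to $I$ itself, and finally extracting the isometry and completeness statements. Throughout I read the second hypothesis as the compactness of $(I^*I-1)\exp(-sH_1)$.

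First I would reformulate the third hypothesis. Reading the $T$-formula as an identity of sesquilinear forms on $\dom(H_2)\times\dom(H_1)$ and using the self-adjointness of $H_2$, it says precisely that the a priori only densely defined generalized commutator $H_2K-KH_1$ extends to the bounded, indeed trace-class, operator $T$. Consequently $t\mapsto\exp(itH_2)K\exp(-itH_1)$ has weak derivative $i\exp(itH_2)T\exp(-itH_1)$. Invoking the abstract trace-class existence-and-completeness theorem (Pearson's theorem in the two-space form underlying Birman--Belopol'skii, cf.~\cite{Re3, weidmann2}), the trace-class property of $T$ yields that $W_\pm(H_2,H_1,K)$ exist and are complete; the same argument applied to $K^*=\exp(-sH_1)I^*\exp(-sH_2)$, for which $H_1K^*-K^*H_2=-T^*$ is again trace class, produces the reverse operators $W_\pm(H_1,H_2,K^*)$.

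Next I would remove the regularizing factors to pass from $K$ to $I$. Since $\exp(-sH_j)$ is injective with dense range but \emph{not} boundedly invertible, one cannot simply factor it out of a strong limit; instead I would localize in energy. For a compact interval $\Delta\subset(0,\infty)$ the operator $\exp(-sH_j)$ is boundedly invertible on $\mathbf 1_\Delta(H_j)\IHH_j$, and the intertwining relation $W_\pm(H_2,H_1,K)\mathbf 1_\Delta(H_1)=\mathbf 1_\Delta(H_2)W_\pm(H_2,H_1,K)$ keeps the scattering states in the region where $\exp(sH_2)$ acts boundedly. Combining this with the invariance principle for the bounded monotone function $\lambda\mapsto\exp(-s\lambda)$ and the domain compatibility $I\dom(H_1)=\dom(H_2)$ of the fourth hypothesis, I would deduce existence and completeness of $W_\pm(H_2,H_1,I)$ on each $\mathbf 1_\Delta(H_1)\IHH_{\mathrm{ac}}(H_1)$, and then on all of $\IHH_{\mathrm{ac}}(H_1)$ by letting $\Delta\uparrow(0,\infty)$ and using density. \textbf{This is the step I expect to be the main obstacle}, precisely because the smoothing operators lack bounded inverses, so the transfer must be carried out through energy cutoffs rather than by a direct algebraic manipulation.

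Finally I would use the second hypothesis to identify the initial and final spaces. For $g$ in the dense subspace $\exp(-sH_1)\IHH_{\mathrm{ac}}(H_1)\subset\IHH_{\mathrm{ac}}(H_1)$ one has $\exp(-itH_1)g\rightharpoonup 0$ weakly as $t\to\pm\infty$ (Riemann--Lebesgue for absolutely continuous spectral measures), whence compactness of $(I^*I-1)\exp(-sH_1)$ forces $(I^*I-1)\exp(-itH_1)g\to 0$ in norm and therefore $\|I\exp(-itH_1)g\|\to\|g\|$. Thus $W_\pm(H_2,H_1,I)$ is isometric on $\IHH_{\mathrm{ac}}(H_1)$, i.e.\ a partial isometry with initial space $\mathrm{Ran}\,P_{\mathrm{ac}}(H_1)$. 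Running the same three stages for the reverse system to obtain $W_\pm(H_1,H_2,I^*)$, and using that $I$ is boundedly invertible (first hypothesis), pins the final space to $\mathrm{Ran}\,P_{\mathrm{ac}}(H_2)$, which is completeness. Since a complete wave operator intertwines $H_1$ and $H_2$ unitarily on their absolutely continuous parts, the equality $\mathrm{spec}_{\mathrm{ac}}(H_1)=\mathrm{spec}_{\mathrm{ac}}(H_2)$ follows.
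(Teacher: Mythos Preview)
Your key insight---that spectral cutoffs $E_j(\mathbb I)$ onto bounded energy intervals render the heat factors $\exp(-sH_j)$ boundedly invertible---is exactly what the paper uses. But the paper's route is more direct than yours: rather than first establishing $W_\pm(H_2,H_1,K)$ for the smoothed identification $K=\exp(-sH_2)I\exp(-sH_1)$ and then transferring back to $I$ (the step you yourself flag as the main obstacle), the paper simply checks the hypotheses of the standard two-space theorem (Theorem~XI.13 of \cite{Re3}) for $I$ itself. Those hypotheses ask, for each bounded interval $\mathbb I$, that $(I^*I-1)E_1(\mathbb I)$ be compact and that the commutator with $E_2(\mathbb I)IE_1(\mathbb I)$ be realized by a trace-class operator. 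Both follow at once from the given assumptions by writing $E_j(\mathbb I)=\exp(-sH_j)\cdot\bigl[\exp(sH_j)E_j(\mathbb I)\bigr]$ with the bracketed factor bounded: thus $(I^*I-1)E_1(\mathbb I)=\bigl[(I^*I-1)\exp(-sH_1)\bigr]\bigl[\exp(sH_1)E_1(\mathbb I)\bigr]$ is compact, and $D:=\exp(sH_2)E_2(\mathbb I)\,T\,\exp(sH_1)E_1(\mathbb I)$ is trace class.

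Your detour through $K$ is not wrong in spirit, but the transfer step is genuinely delicate in the form you propose. The invariance principle you invoke relates $W_\pm$ for $(H_1,H_2)$ to $W_\pm$ for $(\phi(H_1),\phi(H_2))$ with the \emph{same} identification; it does not convert $K$ into $I$. To pass from $W_\pm(H_2,H_1,K)$ to $W_\pm(H_2,H_1,I)$ on $\mathbf 1_\Delta(H_1)\IHH_{\mathrm{ac}}(H_1)$ you would still need to control the piece $(I-\mathbf 1_\Delta(H_2)I\mathbf 1_\Delta(H_1))\exp(-itH_1)$, since $I$ does not intertwine the spectral projections. This can be carried out, but the paper's approach sidesteps the issue entirely by never introducing $K$.
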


\begin{proof} In view of Theorem XI.13 from \cite{Re3} and its proof, it remains to show that for every bounded interval $\mathbb{I}$ the operator $(I^*I-1)E_1(\mathbb{I})$ is compact, and that there exists a trace class operator $D\in\IJJ^1(\IHH_1, \IHH_2)$ such that for every bounded interval $\mathbb{I}$ and all $f_1,f_2$ as above one has
\begin{align*}
\left\langle \varphi ,Df_1\right\rangle_{\IHH_2}=
\left\langle H_2f_2, E_2(\mathbb{I})IE_1(\mathbb{I})f_1\right\rangle_{\IHH_2} 
-\left\langle f_2, E_2(\mathbb{I})IE_1(\mathbb{I})H_1f_1\right\rangle_{\IHH_2}. 
\end{align*}
However, using that for all self-adjoint operators $A$ and all Borel functions $\phi,\phi\rq{}: \IR\to \IC$ one has
$$
\phi(A)\phi\rq{}(A)\subset (\phi\cdot \phi\rq{}) (A),\quad \dom (\phi(A)\phi\rq{}(A))=\dom (\phi(A)\phi\rq{}(A))\cap \dom (\phi\rq{}(A)),
$$
the required compactness becomes obvious, and furthermore it is easily justified that 
$$
D:=\exp(sH_2)E_2(\mathbb{I})T\exp(sH_1)E_1(\mathbb{I})
$$
has the required trace class property.
\end{proof}

\proof[Acknowledgements]The second author has been supported by the Fonds
National de la Recherche Luxembourg (FNR) under the OPEN scheme
(project GEOMREV O14/7628746).

\end{document}